 \newtheorem{remark}{Remark}
 \newtheorem{lemma}[remark]{Lemma}
 \newtheorem{theorem}[remark]{Theorem}
 \newtheorem{corollary}[remark]{Corollary}
\title{On the strong metric dimension of corona product graphs and join graphs}
\author{  Dorota Kuziak$^{(1)}$, Ismael G. Yero$^{(2)}$ and Juan A.
Rodr\'{\i}guez-Vel\'{a}zquez$^{(1)}$
    \\
$^{(1)}${\small Departament d'Enginyeria Inform\`atica i Matem\`atiques,}\\
{\small Universitat Rovira i Virgili,}  {\small Av. Pa\"{\i}sos
Catalans 26, 43007 Tarragona, Spain.} \\{\small
juanalberto.rodriguez\@@urv.cat, dorota.kuziak\@@urv.cat}
\\
$^{(2)}${\small Departamento de Matem\'aticas, Escuela Polit\'ecnica Superior de Algeciras}\\
{\small Universidad de C\'adiz,} {\small
Av. Ram\'on Puyol s/n, 11202 Algeciras, Spain.} \\ {\small
ismael.gonzalez\@@uca.es}\\
}
\begin{document}
\maketitle

\begin{abstract}
Let $G$ be a connected graph. A vertex $w$  strongly resolves a pair $u$, $v$ of vertices of $G$ if there exists some shortest $u-w$ path containing $v$ or some shortest $v-w$ path containing $u$. A set $W$ of vertices is a strong resolving set for $G$ if every pair of
vertices of $G$ is strongly resolved by some vertex of $W$. The smallest cardinality of a strong resolving set for $G$ is called the strong metric
dimension of $G$. It is known that the problem of computing this invariant is NP-hard. It is therefore desirable  to reduce the problem of computing the strong metric dimension of product graphs, to the problem of computing some parameter of the factor graphs. We show that the problem of finding the strong metric dimension of the corona product $G\odot H$, of two graphs $G$ and $H$, can be transformed to the problem of finding certain clique number of $H$. As a consequence of the study we show that if $H$ has diameter two, then the strong metric dimension of $G\odot H$ is obtained from the strong metric dimension of $H$ and, if $H$ is not connected or its  diameter is greater than two, then the strong metric dimension of $G\odot H$ is obtained from the strong metric dimension of $K_1\odot H$, where $K_1$ denotes the trivial graph. The strong metric dimension of join graphs is also studied.
\end{abstract}

{\it Keywords:} Strong metric dimension, strong resolving set, strong metric basis, clique number, corona product graph, join graph.

{\it AMS Subject Classification Numbers:} 05C12, 05C76, 05C69.

\section{Introduction}

Generators of metric spaces are sets of points with the property that every point of the space is uniquely determined by the distances from their elements. Such generators put a light on some kinds of problems in graph theory  that apparently are not directly related to metric spaces. Given a simple and connected graph $G=(V,E)$, we consider the metric $d_G:V\times V\rightarrow \mathbb{R}^+$, where $d_G(x,y)$ is the length of a shortest path between $u$ and $v$. $(V,d_G)$ is clearly a metric space. A vertex $v\in V$ is said to distinguish two vertices $x$ and $y$ if $d_G(v,x)\ne d_G(v,y)$.
A set $S\subset V$ is said to be a \emph{metric generator} for $G$ if any pair of vertices of $G$ is
distinguished by some element of $S$. A minimum generator is called a \emph{metric basis}, and
its cardinality the \emph{metric dimension} of $G$, denoted by $dim(G)$. Motivated by the problem of uniquely determining the location of an intruder in a network, the concept of metric
dimension of a graph was introduced by Slater in \cite{slater-ld,slater}, where the metric generators were called \emph{locating sets}. The concept of metric dimension of a graph was also introduced by Harary and Melter in \cite{harary}, where metric generators were called \emph{resolving sets}. Applications
of this invariant to the navigation of robots in networks are discussed in \cite{Khuller} and applications to chemistry in \cite{pharmacy1,pharmacy2}.  Once the first article in this topic was published several papers have been appearing in the literature, e.g. \cite{bailey, pelayo, chartrand, feng, guo, haynes, melter, zhang, ykr}.  Remarkable variations of the concept of metric generators are  resolving dominating sets \cite{brigham}, independent resolving sets \cite{chartrand1}, local metric sets \cite{LocalMetric}, strong resolving sets \cite{seb}, etc.

In this article we are interested in the study  of strong resolving sets \cite{oellerman,seb}.
 For two vertices $u$ and $v$ in a connected graph $G$, the interval $I_G [u, v]$ between $u$ and $v$ is defined as the collection of all vertices that belong to some
shortest $u-v$ path. A vertex $w$ strongly resolves two vertices $u$ and $v$ if $v\in  I_G [u,w]$ or  $u\in I_G [v,w]$. A set $S$ of vertices
in a connected graph $G$ is a \emph{strong resolving set} for $G$ if every two vertices of $G$ are strongly resolved by some vertex
of $S$. The smallest cardinality of a strong resolving set of $G$ is called \emph{strong metric dimension} and is denoted by
$dim_s(G)$.  So, for example, $dim_s(G)=n-1$ if and only if $G$ is the complete graph  of order $n$.
For the cycle $C_n$ of order $n$ the strong dimension is $dim_s(C_n) = \lceil n/2\rceil$  and if $T$ is a tree, its strong metric dimension equals the
number of leaves of $T$ minus $1$ (see \cite{seb}).  We say that a strong resolving set for $G$ of cardinality $dim_s(G)$ is a \emph{strong metric basis} of $G$.

It is known that the problem of computing the strong metric dimension of a graph is NP-hard \cite{oellerman}. It is therefore desirable  to reduce the problem of computing the strong metric dimension of product graphs, to the problem of computing some parameters of the factor graphs. We show that the problem of finding the strong metric dimension of the corona product $G\odot H$, of two graphs $G$ and $H$, can be transformed to the problem of finding certain clique number of $H$. As a consequence of the study we show that if $H$ has diameter two, then the strong metric dimension of $G\odot H$ is obtained from the strong metric dimension of $H$ and, if $H$ is not connected or its  diameter is greater than two, then the strong metric dimension of $G\odot H$ is obtained from the strong metric dimension of $K_1\odot H$, where $K_1$ denotes the trivial graph. The strong metric dimension of join graphs is also studied.

We begin by giving some basic concepts and notations. For two adjacent vertices $u$ and $v$ of $G=(V,E)$ we use the notation  $u\sim v$. For a  vertex
$v$ of $G$, $N_G(v)$ denotes the set of neighbors that $v$ has in $G$, i.e.,  $N_G(v)=\{u\in V:\; u\sim v\}$. The set $N_G(v)$ is called the \emph{open neighborhood of} $v$ in $G$  and $N_G[v]=N_G(v)\cup \{v\}$ is called the \emph{closed neighborhood of} $v$ in $G$.  The degree of a vertex $v$ of $G$ will be denoted by $\delta_G(v)$, i.e., $\delta_G(v)=|N_G(v) |$. Recall that the \emph{clique number} of a graph $G$, denoted by $\omega(G)$, is the number of vertices in a maximum clique in $G$.
Two distinct vertices $x$, $y$ are called \emph{true twins} if $N_G[x] = N_G[y]$. We say that $X\subset V$ is a \emph{twin-free clique} in $G$ if the subgraph induced by $X$ is a clique and for every $u,v\in X$ it follows $N_G[u]\ne N_G[v]$, i.e., the subgraph induced by $X$ is a clique and it contains no true twins. We say that the \emph{twin-free clique number} of $G$, denoted by $\varpi(G)$, is the maximum cardinality among all twin-free cliques in $G$. So, $\omega(G)\ge \varpi(G)$. We refer to a  $\varpi(G)$-set in a graph $G$ as a twin-free clique  of cardinality $\varpi(G)$.

We say that a vertex $u$ of $G$ is \emph{maximally distant} from $v$ if for every $w\in N_G(u)$, $d_G(v,w)\le d_G(u, v)$. If $u$ is
maximally distant from $v$ and $v$ is maximally distant from $u$, then we say that $u$ and $v$ are \emph{mutually maximally distant}. Since no vertex of $G$ strongly resolves two mutually maximally distant vertices of $G$, we have the following remark which will be useful later.
\begin{remark}\label{lemmaL}
For every pair of mutually maximally distant vertices $x,y$ of a connected graph $G$ and for every strong metric basis $S$ of $G$, it follows that  $x\in S$ or $y\in S$.
\end{remark}
Let $G$ and $H$ be two graphs of order $n_1$ and $n_2$, respectively. Recall that the corona product $G\odot H$ is defined as the graph obtained from $G$ and $H$ by taking one copy of $G$ and $n_1$ copies of $H$ and joining by an edge each vertex from the $i^{th}$-copy of $H$ with the $i^{th}$-vertex of $G$. We will denote by $V=\{v_1,v_2,...,v_n\}$ the set of vertices of $G$ and by $H_i=(V_i,E_i)$ the copy of $H$ such that $v_i\sim v$ for every $v\in V_i$. The join $G+H$ is defined as the graph obtained from disjoined graphs $G$ and $H$ by taking one copy of $G$ and one copy of $H$ and joining by an edge each vertex of $G$ with each vertex of $H$. Notice that the corona graph $K_1\odot H$ is isomorphic to the join graph $K_1+H$.

\section{Main results}

We shall start studying the relationship between the strong metric dimension of a connected graph and its twin-free clique number.

\begin{theorem}\label{TheoremTwinclique}
Let $H$ be a  connected graph of order $n\ge 2$. Then
$$dim_s(H)\le n-\varpi(H).$$
Moreover, if $H$ has diameter two, then
$$dim_s(H) = n-\varpi(H).$$
\end{theorem}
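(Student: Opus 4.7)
The plan is to prove the upper bound by exhibiting a strong resolving set of size $n-\varpi(H)$, and then to establish the matching lower bound in the diameter-two case via an analysis of the mutually maximally distant (MMD) pairs.

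For the upper bound, I would let $X$ be a twin-free clique of cardinality $\varpi(H)$ and argue that $S := V(H) \setminus X$ is a strong resolving set. For any pair of distinct vertices $\{u,v\}$ with $u \in S$, the vertex $u$ itself strongly resolves the pair because $u \in I_H[v,u]$; so the only pairs that need attention are $u,v \in X$. For such a pair, the twin-free property of $X$ provides (without loss of generality) a vertex $w \in N_H[u] \setminus N_H[v]$. Since $u \sim v$ forces $u \in N_H[v]$, we have $w \neq u$; thus $w \sim u$ and $w \not\sim v$. Because $X$ is a clique and $w$ is not adjacent to $v \in X$, we must have $w \notin X$, hence $w \in S$. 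Now $d_H(u,w)=1$ and $d_H(v,w) \geq 2$, while the length-two walk $v,u,w$ shows that $d_H(v,w)=2$ and that $v,u,w$ is a shortest $v$-$w$ path; therefore $u \in I_H[v,w]$ and $w$ strongly resolves $u,v$. This yields $dim_s(H) \leq |S| = n - \varpi(H)$.

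For the equality when $H$ has diameter two, I would verify the reverse inequality $n - \varpi(H) \leq dim_s(H)$. Let $S$ be a strong metric basis of $H$ and set $T := V(H) \setminus S$. The core observation is that in a graph of diameter two the MMD pairs are exactly (i) the pairs of non-adjacent vertices, and (ii) the pairs of true twins. Indeed, if $d_H(u,v)=2$, then every neighbour of $u$ is at distance at most two from $v$ by the diameter bound, so the pair is automatically MMD; and if $u \sim v$, then $u$ is maximally distant from $v$ precisely when $N_H(u) \subseteq N_H[v]$, so MMD in the adjacent case is equivalent to $N_H[u]=N_H[v]$, i.e., being true twins. By Remark~\ref{lemmaL}, every MMD pair meets $S$, so $T$ contains no two non-adjacent vertices and no pair of true twins; that is, $T$ is a twin-free clique. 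Hence $|T| \leq \varpi(H)$, which gives $n - dim_s(H) \leq \varpi(H)$ and completes the proof.

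The main subtlety lies in the upper bound: the twin-free hypothesis is used to locate a distinguishing neighbour $w$ of one of the clique vertices, and the clique hypothesis is then used to conclude that $w$ must lie outside $X$, hence inside $S$. Once that ingredient is in place, both the resolving step and the diameter-two lower bound are essentially routine.
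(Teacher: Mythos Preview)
Your proof is correct and follows essentially the same route as the paper's own argument: exhibit the complement of a maximum twin-free clique as a strong resolving set for the upper bound, and for the diameter-two lower bound show that the complement of a strong metric basis must be a twin-free clique because every non-adjacent pair and every pair of true twins is mutually maximally distant. You supply a bit more detail than the paper (explicitly verifying $w\notin X$ via the clique property, and spelling out the characterization of MMD pairs in diameter-two graphs), but the structure and key ideas are identical.
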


\begin{proof}
Let $W$ be a maximum twin-free clique in $H=(V,E)$. We will show that $V - W$ is a strong resolving set for $H$. Since $W$ is a twin-free clique, for any two distinct vertices $u,v\in W$ there exists $s\in V - W$ such that either ($s\in N_H(u)$ and $s\notin N_H(v)$) or ($s\in N_H(v)$ and $s\notin N_H(u)$). Without loss of generality, we consider $s\in N_H(u)$ and $s\notin N_H(v)$. Thus, $u\in I_H[v,s]$ and, as a consequence,  $s$ strongly resolves $u$ and $v$. Therefore, $dim_s(H)\le n - \varpi (H)$.

Now, suppose that $H$ has diameter two. Let $X$ be a strong metric basis of $H$ and let $u$, $v$ be two distinct vertices of $H$. If $d_H(u,v)=2$ or $N_H[u]=N_H[v]$, then $u$ and $v$ are mutually maximally distant vertices of $H$, so $u\in X$ or $v\in X$. Hence, for any two distinct vertices $x,y\in V-X$ we have $x\sim y$ and $N_H(x)\ne N_H(y)$. As a consequence, $|V-X|\le \varpi(H)$. Therefore, $dim_s(H)\ge n-\varpi(H)$ and the result follows.
\end{proof}

\begin{corollary}
Let $H$ be a graph of diameter two and order $n$. Let $\omega(H)$ be the clique number of $H$ and let $c(H)$ be the number of  vertices of $H$ having degree $n-1$.
If the only true twins of $H$ are  vertices of degree $n-1$, then
  $$dim_s(H)=n+c(H)-\omega(H)-1.$$
Moreover,  if $H$ has no true twins, then $$dim_s(H)=n-\omega(H).$$
\end{corollary}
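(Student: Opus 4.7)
The plan is to apply Theorem~\ref{TheoremTwinclique} to reduce the statement to a clean combinatorial identity for $\varpi(H)$, and then to compute $\varpi(H)$ in terms of $\omega(H)$ and $c(H)$. Since $H$ has diameter two, Theorem~\ref{TheoremTwinclique} gives $dim_s(H)=n-\varpi(H)$, so both equalities in the corollary reduce to showing that $\varpi(H)=\omega(H)-c(H)+1$ under the first hypothesis, and $\varpi(H)=\omega(H)$ under the second.

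The starting observation is that any vertex of degree $n-1$ is adjacent to every other vertex of $H$, so the set $D$ of degree-$(n-1)$ vertices is contained in every maximum clique (any clique can be extended by such a vertex), and moreover any two vertices of $D$ satisfy $N_H[u]=V=N_H[v]$, i.e.\ they are pairwise true twins. Consequently a twin-free clique can contain at most one element of $D$.

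For the lower bound on $\varpi(H)$, I would take a maximum clique $Q$ of $H$ (so $|Q|=\omega(H)$ and $D\subseteq Q$), and delete all but one element of $D$ from $Q$ if $c(H)\ge 1$, obtaining a set $W$ of size $\omega(H)-c(H)+1$. The verification that $W$ is twin-free is the routine core: within $W\setminus D$ the hypothesis on true twins rules out any twin pair, while the unique vertex of $W\cap D$ cannot be a true twin of a vertex in $W\setminus D$ because the two vertices would then have the same degree. For the matching upper bound I would take a maximum twin-free clique $W$ and use the key step that $W\cup D$ is still a clique (every vertex of $D$ is universal), hence $|W|+|D|-|W\cap D|=|W\cup D|\le\omega(H)$; combined with $|W\cap D|\le 1$ this yields $\varpi(H)=|W|\le\omega(H)-c(H)+1$. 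The step I expect to be the main obstacle is precisely this upper bound, because it requires noticing that the union with $D$ remains a clique so as to compare $|W|$ against $\omega(H)$ rather than only against clique sizes disjoint from $D$.

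Substituting $\varpi(H)=\omega(H)-c(H)+1$ into $dim_s(H)=n-\varpi(H)$ then yields the claimed formula $dim_s(H)=n+c(H)-\omega(H)-1$ (valid whenever $c(H)\ge 1$). For the \emph{moreover} part, if $H$ has no true twins then in particular there cannot be two vertices of degree $n-1$, so every clique of $H$ is automatically twin-free; hence $\varpi(H)=\omega(H)$ and Theorem~\ref{TheoremTwinclique} immediately gives $dim_s(H)=n-\omega(H)$, finishing the proof.
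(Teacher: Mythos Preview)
Your argument is correct and is precisely the route the paper has in mind: the corollary is stated without proof immediately after Theorem~\ref{TheoremTwinclique}, so the intended derivation is exactly to invoke $dim_s(H)=n-\varpi(H)$ and then compute $\varpi(H)$ in terms of $\omega(H)$ and $c(H)$ via the universal-vertex observation, which you carry out cleanly.
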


\begin{lemma}\label{lemma_join_clique}
Let $G$ and $H$ be two connected graphs of order $n_1\ge 2$ and $n_2\ge 2$, and maximum degree $\Delta_1$ and $\Delta_2$, respectively.
\begin{itemize}
\item[{\rm (i)}]  If $\Delta_1\ne n_1-1$ or $\Delta_2\ne n_2-1$, then
$$\varpi (G+H) = \varpi (G) + \varpi (H).$$
\item[{\rm (ii)}] If  $\Delta_1 = n_1-1$ and $\Delta_2 = n_2-1$, then
$$\varpi (G+H) = \varpi (G) + \varpi (H) - 1.$$
\end{itemize}
\end{lemma}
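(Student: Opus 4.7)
The plan is to analyze twin-free cliques in $G+H$ by decomposing each one according to its intersections with $V(G)$ and $V(H)$. The starting point is the formula for closed neighborhoods in the join: for $u\in V(G)$ one has $N_{G+H}[u] = N_G[u] \cup V(H)$, and symmetrically for $w\in V(H)$. Two consequences are immediate. First, two vertices of $V(G)$ are true twins in $G+H$ if and only if they are true twins in $G$, and likewise for pairs in $V(H)$. Second, a vertex $u\in V(G)$ and a vertex $w\in V(H)$ form a \emph{cross} true twin pair in $G+H$ precisely when $\delta_G(u)=n_1-1$ and $\delta_H(w)=n_2-1$.

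Since every vertex of $G$ is adjacent in $G+H$ to every vertex of $H$, any clique $X$ in $G+H$ decomposes as $X=X_G\cup X_H$ with $X_G:=X\cap V(G)$ a clique in $G$ and $X_H:=X\cap V(H)$ a clique in $H$. Combined with the twin analysis, $X$ is a twin-free clique in $G+H$ if and only if $X_G$ is twin-free in $G$, $X_H$ is twin-free in $H$, and no pair $(u,w)\in X_G\times X_H$ consists of a vertex of degree $n_1-1$ in $G$ and a vertex of degree $n_2-1$ in $H$. In particular, $|X|\le\varpi(G)+\varpi(H)$ always holds.

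For part (i), the hypothesis $\Delta_1\ne n_1-1$ or $\Delta_2\ne n_2-1$ means that one of the two factors has no vertex of degree equal to its order minus one, so the cross-twin obstruction is vacuous. Hence the union of any $\varpi(G)$-set $W_G$ and any $\varpi(H)$-set $W_H$ is a twin-free clique in $G+H$ of size $\varpi(G)+\varpi(H)$, matching the universal upper bound.

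For part (ii), I would first establish the auxiliary fact that when $\Delta_1=n_1-1$ every $\varpi(G)$-set contains exactly one vertex of degree $n_1-1$. Indeed, all such vertices share the closed neighborhood $V(G)$ and are therefore pairwise true twins, so at most one of them can lie in any twin-free clique; conversely, a twin-free clique $C$ of $G$ that avoids them can always be enlarged by adjoining any vertex $u$ with $\delta_G(u)=n_1-1$ (such a $u$ is adjacent to all of $C$ and no vertex of $C$ has closed neighborhood $V(G)$, so $u$ is not a true twin of any element of $C$), contradicting maximality. Applying this to both factors and combining with the cross-twin condition rules out simultaneously achieving $|X_G|=\varpi(G)$ and $|X_H|=\varpi(H)$, whence $\varpi(G+H)\le\varpi(G)+\varpi(H)-1$. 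The matching lower bound is realized by removing one maximum-degree vertex from $W_G\cup W_H$. The only real obstacle is handling the cross-twin phenomenon, which is precisely what forces the $-1$ correction in case (ii).
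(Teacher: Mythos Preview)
Your proposal is correct and follows essentially the same approach as the paper: decompose a twin-free clique in $G+H$ into its intersections with $V(G)$ and $V(H)$, observe that each piece is twin-free in its factor, and handle the cross-twin obstruction via the auxiliary fact that a maximum twin-free clique in a graph with a universal vertex must contain exactly one such vertex. Your write-up is in fact more explicit than the paper's, since you spell out the closed-neighborhood formula in the join and actually justify the auxiliary fact, which the paper merely asserts.
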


\begin{proof}
Given a $\varpi(G+H)$-set $Z$ we have that for every $u_1,u_2\in U=Z\cap V(G)$ it follows $N_{G+H}[u_1]\ne N_{G+H}[u_2]$. So, $N_{G}[u_1]\ne N_{G}[u_2]$ and, as a consequence, $U$ is a twin-free clique in $G$. Analogously we show that $W=Z\cap V(H)$ is a twin-free clique in $H$. Hence, $\varpi(G+H)=|Z|=|U|+|W|\le \varpi(G)+\varpi(H)$.

Now, if $\Delta_1 = n_1-1$ and $\Delta_2 = n_2-1$, then every $\varpi(G)$-set ($\varpi(H)$-set) contains exactly one vertex of degree $\Delta_1 = n_1-1$ ($\Delta_2 = n_2-1$) and every $\varpi(G+H)$-set contains exactly one vertex of degree $n_1+n_2-1$. Hence, in this case $|U|<\varpi(G)$ or $|W|<\varpi(H)$ and, as a consequence, $\varpi(G+H)=|Z|=|U|+|W|\le \varpi(G)+\varpi(H)-1$.

On the other hand, let $U'$ be a $\varpi(G)$-set and let $W'$ be a $\varpi(H)$-set.

In order to complete the proof of (i), we assume, without loss of generality, that $\Delta_1\ne n_1-1$. Let $u\in  U'$ and $w\in  W'$. Since $\delta_G(u)\ne n_1 - 1$, there exists a vertex $x\in V(G)-U'$ such that $u\not\sim x$. From the definition of $G+H$ we have $w\sim x$ and  the subgraph induced by $U'\cup W'$ is a clique in $G+H$. So, $u$ and $w$ are not true twins in $G+H$ and, as a consequence, $U'\cup W'$ is a twin-free clique in $G+H$. Hence, $\varpi(G+H)\ge |U'\cup W'|= \varpi(G)+\varpi(H)$. The proof of (i) is complete.

Now, if $\Delta_1 = n_1-1$,
then we take $x\in  U'$ such that $\delta_{G}(x) = n_1 - 1$ 
and as above we see that two vertices  $v,w\in U'\cup W'-\{x\}$    are not true twins in $G+H$. Hence, $U'\cup W'-\{x\}$ is a twin-free clique in $G+H$. So, $\varpi(G+H)\ge |U'|+|W'|-1= \varpi(G)+\varpi(H)-1$. Therefore, the proof of (ii) is complete.
\end{proof}

If $G$ and $H$ are two complete graphs of order $n_1$ and $n_2$, respectively, then $G+H = K_{n_1+n_2}$ and $dim_s(G+H) = dim_s(K_{n_1+n_2}) = n_1 + n_2 - 1$. From Theorem \ref{TheoremTwinclique} and Lemma \ref{lemma_join_clique} we obtain the following results.

\begin{theorem}\label{corollary_inequality}
Let $G$ and $H$ be two connected graphs of order $n_1\ge 2$ and $n_2\ge 2$, and maximum degree $\Delta_1$ and $\Delta_2$, respectively.

\begin{enumerate}[{\rm (i)}]
 \item If $\Delta_1\ne n_1-1$ or $\Delta_2\ne n_2-1$, then $$dim_s(G+H)=n_1 + n_2 - \varpi(G) - \varpi(H)\ge  dim_s(G)+dim_s(H).$$
 \item If $G$ and $H$ are graphs of diameter two where $\Delta_1\ne n_1-1$ or $\Delta_2\ne n_2-1$, then $$dim_s(G+H) = dim_s(G) + dim_s(H).$$
\item  If  $\Delta_1 = n_1-1$ and $\Delta_2 = n_2-1$,  then $$dim_s(G+H) = dim_s(G) + dim_s(H)+1.$$
\end{enumerate}
\end{theorem}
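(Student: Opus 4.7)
The approach is to reduce the whole theorem to a single computation of $dim_s$ on $G+H$ via Theorem~\ref{TheoremTwinclique} applied to $G+H$ itself, using Lemma~\ref{lemma_join_clique} to evaluate $\varpi(G+H)$. The key geometric fact underlying the plan is that $G+H$ always has diameter at most two (every vertex of one side is adjacent to every vertex of the other), with diameter exactly two unless both $G$ and $H$ are complete, in which case $G+H=K_{n_1+n_2}$ and $dim_s(G+H)=n_1+n_2-1$ is known directly.

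For part (i), the hypothesis $\Delta_1\ne n_1-1$ or $\Delta_2\ne n_2-1$ ensures that at least one of $G,H$ is not complete, so $G+H$ has diameter exactly two. The equality clause of Theorem~\ref{TheoremTwinclique} then gives $dim_s(G+H)=n_1+n_2-\varpi(G+H)$, and Lemma~\ref{lemma_join_clique}(i) substitutes $\varpi(G+H)=\varpi(G)+\varpi(H)$, yielding the claimed formula. For the inequality $dim_s(G+H)\ge dim_s(G)+dim_s(H)$, I would simply apply the upper bound part of Theorem~\ref{TheoremTwinclique} to $G$ and $H$ individually and add.

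Part (ii) then drops out as an immediate consequence: if both $G$ and $H$ have diameter two, Theorem~\ref{TheoremTwinclique} upgrades its upper bounds to equalities $dim_s(G)=n_1-\varpi(G)$ and $dim_s(H)=n_2-\varpi(H)$, whose sum matches the formula from part (i).

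For part (iii), the assumption $\Delta_1=n_1-1$ and $\Delta_2=n_2-1$ says that $G$ and $H$ each carry a universal vertex, so each has diameter at most two. If both are complete, then so is $G+H$, and the identity $dim_s(G+H)=n_1+n_2-1=(n_1-1)+(n_2-1)+1=dim_s(G)+dim_s(H)+1$ is obtained by direct computation (this is the case highlighted in the paragraph preceding the theorem). Otherwise $G+H$ still has diameter two, and combining Theorem~\ref{TheoremTwinclique} with Lemma~\ref{lemma_join_clique}(ii) gives $dim_s(G+H)=n_1+n_2-\varpi(G)-\varpi(H)+1$, which is $dim_s(G)+dim_s(H)+1$ after invoking the equality $dim_s(F)=n-\varpi(F)$ for $F\in\{G,H\}$ — valid either because $F$ has diameter two or because $F=K_n$ (where $dim_s(K_n)=n-1=n-\varpi(K_n)$). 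The main obstacle I anticipate is the small amount of case bookkeeping needed here to cover when one (or both) of $G,H$ happens to be complete, since Theorem~\ref{TheoremTwinclique} as stated only furnishes equality for diameter exactly two; once this is dispatched by noting the formula still holds trivially for complete factors, the three parts follow in a single clean chain of substitutions.
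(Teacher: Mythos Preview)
Your proposal is correct and follows exactly the route the paper indicates: the paper's proof consists solely of the sentence ``From Theorem~\ref{TheoremTwinclique} and Lemma~\ref{lemma_join_clique} we obtain the following results,'' together with the observation preceding the theorem that the both-complete case gives $dim_s(G+H)=n_1+n_2-1$ directly. You have filled in precisely the details (diameter of $G+H$, the upper-bound half of Theorem~\ref{TheoremTwinclique} for the inequality in (i), and the bookkeeping for complete factors in (iii)) that the paper leaves implicit.
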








The following lemma shows that the problem of finding the strong metric dimension of a corona product graph can be transformed to the problem of finding the strong metric dimension of a graph of diameter two.

\begin{lemma}\label{iguales}
Let $G$ be a connected graph of order $n$ and let $H$ be a graph. Let $H_i$ be the subgraph of $G\odot H$ corresponding to the $i^{th}$-copy of $H$. Then
$$dim_s(G\odot H)=dim_s(K_1 + \bigcup_{i=1}^nH_i).$$
\end{lemma}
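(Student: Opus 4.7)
My plan is to use the well-known characterization that the strong metric dimension of any connected graph $\Gamma$ equals the vertex cover number $\tau(\Gamma_{SR})$ of its \emph{strong resolving graph} $\Gamma_{SR}$, whose vertex set is $V(\Gamma)$ and whose edges are the mutually maximally distant (MMD) pairs of $\Gamma$; of this formula, Remark \ref{lemmaL} captures the easy half, and the reverse inequality is proved in \cite{oellerman}. Under this viewpoint, it suffices to show that $\tau\bigl((G\odot H)_{SR}\bigr)=\tau\bigl((K_1+\bigcup_{i=1}^nH_i)_{SR}\bigr)$. The case $n=1$ is immediate, since $G\odot H=K_1+H=K_1+\bigcup_{i=1}^1 H_i$, so I assume $n\ge 2$ throughout.

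The first step is to show that the ``central'' vertices contribute no edges to the strong resolving graphs: no vertex $v_i\in V(G)$ is maximally distant from any other vertex of $G\odot H$, and the $K_1$-vertex $w$ of $K_1+\bigcup H_i$ is not maximally distant from any other vertex. For $v_i$, note that $V_i\subseteq N_{G\odot H}(v_i)$ and, since $n\ge 2$ and $G$ is connected, $v_i$ has some neighbor $v_k\in V(G)$; a brief case split on whether the target vertex $x$ lies in $V_i$, in some $V_j$ with $j\ne i$, or in $V(G)\setminus\{v_i\}$ shows that some neighbor of $v_i$ is strictly farther from $x$ than $v_i$ is. The analogous argument for $w$ uses that distinct copies $V_i,V_j$ sit at distance $2$ in $K_1+\bigcup H_i$ while $w$ sits at distance $1$. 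Hence the vertices of $V(G)$ (respectively $w$) are isolated in the corresponding strong resolving graph.

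The second step is to verify that for any two vertices $u\in V_i$ and $u'\in V_j$, the MMD relation coincides in $G\odot H$ and in $K_1+\bigcup H_i$. In both graphs the neighbors of $u$ consist of $N_{H_i}(u)$ together with a single central neighbor ($v_i$ or $w$), and the distances from vertices of $\bigcup V_k$ to $u'$ differ between the two graphs only by an additive constant depending on the pair of copies involved; hence the inequality ``$d(y,u')\le d(u,u')$'' that defines maximal distantness is preserved term-by-term. Splitting into $i\ne j$ (always MMD in both graphs, with distance $d_G(v_i,v_j)+2$ in $G\odot H$ and $2$ in $K_1+\bigcup H_i$) and $i=j$ (MMD iff $u,u'$ are non-adjacent in $H_i$ or form a pair of true twins in $H_i$, in either graph) closes the case analysis. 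Combined with the first step, the strong resolving graphs of $G\odot H$ and $K_1+\bigcup H_i$ agree up to isolated vertices, so they have the same vertex cover number.

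The Oellermann--Peters-Fransen formula then yields $dim_s(G\odot H)=dim_s(K_1+\bigcup H_i)$. I expect the main obstacle to be the MMD case analysis in the second step: within a single copy the distances are at most $2$ in both graphs, but across distinct copies one must explicitly check that the extra additive summand $d_G(v_i,v_j)$ appearing in $G\odot H$ cancels out of each maximal-distantness inequality. Careful bookkeeping of true twins versus non-adjacent pairs inside $H_i$ is also needed.
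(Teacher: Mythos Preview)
Your approach is correct but takes a genuinely different route from the paper's. The paper works directly with strong resolving sets: given a strong metric basis $S'$ of $G\odot H$ it verifies, by a case analysis on the location of the pair $x,y$ to be resolved, that $\bigcup_i(S'\cap V_i)$ strongly resolves $K_1+\bigcup_i H_i$, and conversely that $W'\setminus\{v\}$ strongly resolves $G\odot H$ for any basis $W'$ of $K_1+\bigcup_i H_i$; the only tool invoked beyond the definitions is Remark~\ref{lemmaL}. You instead pass through the Oellermann--Peters-Fransen identity $dim_s(\Gamma)=\tau(\Gamma_{SR})$ from \cite{oellerman} and show that the two strong resolving graphs agree on $\bigcup_i V_i$ while all central vertices are isolated. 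Your argument is more structural---it explains \emph{why} the dimensions coincide by exhibiting an isomorphism (up to isolated vertices) of the strong resolving graphs, and as a by-product shows that the strong metric bases of the two graphs, restricted to $\bigcup_i V_i$, are literally the same---whereas the paper's proof is self-contained, never needing the nontrivial direction of the vertex-cover formula. Your within-copy characterization (MMD in either graph iff non-adjacent in $H_i$ or true twins in $H_i$) and your observation that the cross-copy additive shift $d_G(v_i,v_j)$ cancels termwise from every maximal-distance inequality are both correct, so the obstacle you anticipated does not in fact arise.
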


\begin{proof}
As the result is obvious for $n=1$, we take $n\ge 2$.
Let $v$ be the vertex of $K_1$ and let $S'$ be a strong resolving set for $G\odot H$.
 We will show that $S=\cup_{i=1}^n (S'\cap V_i)$ is a strong resolving set for $K_1 + \cup_{i=1}^nH_i$. We consider $x,y$ are two different vertices of $K_1 + \cup_{i=1}^nH_i$ not belonging to $S$.  We differentiate the following cases.

Case 1: $x=v$ and $y\in V_i$, for some $i$. For any $u\in V_j$, $j\ne i$, we have $x\in I_{K_1 + \cup_{i=1}^nH_i}[u,y]$ and since $y$ and $u$ are mutually maximally distant in $G\odot H$,  we have $y\in S$ or $u\in S$.

Case 2: $x,y\in V_i$. Let $u$ be a vertex of $S'$ which strongly resolves $x$ and $y$ in $G\odot H$. As no vertex of $G\odot H$ not belonging to $V_i$ strongly resolves $x$ and $y$, we have that $u\in V_i$ and $u\in S$. Hence,  $u$ strongly resolves $x$ and $y$ in $K_1 + \cup_{i=1}^nH_i$.

Note that in the case $x\in V_i$ and $y\in V_j$, $i\ne j$, we have that $x$ and $y$ are mutually maximally distant in $G\odot H$. Thus,  we have $x\in S$ or $y\in S$. Hence, $S$ is a strong resolving set for  $K_1 + \cup_{i=1}^nH_i$ and, as a consequence, $dim_s(G\odot H)\ge dim_s(K_1 + \cup_{i=1}^nH_i).$

Now, given a strong resolving set for $K_1 + \cup_{i=1}^nH_i$ denoted by  $W'$,  let us show that $W=W'-\{v\}$ is a strong resolving set for $G\odot H$.
Let $x,y$ be two different vertices of $G\odot H$ not belonging to $W$. We denote by  $V=\{v_1,v_1,...,v_n\}$  the vertex set of $G$, where $v_i$ is the vertex of $G$ adjacent to every vertex of $V_i$ in $G\odot H$, $i\in \{1,...,n\}$. We differentiate the following cases.

Case 1: $x=v_i\in V$ and $y\in V_i$. Let $u\in V_j$, $j\ne i$. In this case we have $x\in I_{G\odot H}[u,y]$ and, since $y$ and $u$ are mutually maximally distant in $K_1 + \cup_{i=1}^nH_i$, we have $y\in W$ or $u\in W$.

Case 2. $x=v_i\in V$ and $y\in V_j$, $j\ne i$. For every $u\in V_i$ we have  $x\in I_{G\odot H}[u,y]$ and, since $y$ and $u$ are mutually maximally distant in $K_1 + \cup_{i=1}^nH_i$,  we have $y\in W$ or $u\in W$.

Case 3: $x,y\in V$. Let $x=v_i$, $y=v_j$, $u_i\in V_i$ and $u_j\in V_j$. We have $x\in I_{G\odot H}[u_i,y]$ and $y\in I_{G\odot H}[u_j,x]$. As $u_i$ and $u_j$ are mutually maximally distant in $K_1 + \cup_{i=1}^nH_i$,  we have $u_i\in W$ or $u_j\in W$.

Finally, note that the case  $x\in V_i$ and $y\in V_j$, where $i,j\in\{1,2,...,n\}$, leads to $x\in W$ or $y\in W$. Therefore,  $W$ is a strong resolving set for  $G\odot H$ and, as a consequence, $dim_s(G\odot H)\le dim_s(K_1 + \cup_{i=1}^nH_i).$
\end{proof}

\begin{corollary}\label{theoremoK1}
For any connected graph $G$ of order $n$,
$dim_s(G\odot K_1)=n-1.$
\end{corollary}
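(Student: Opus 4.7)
The plan is to invoke Lemma~\ref{iguales} directly, taking $H=K_1$. In that setting each copy $H_i$ is a single isolated vertex, so $\bigcup_{i=1}^{n}H_i$ is the edgeless graph on $n$ vertices, and consequently $K_1+\bigcup_{i=1}^{n}H_i$ is precisely the star $K_{1,n}$. The lemma therefore reduces the claim to showing that $dim_s(K_{1,n})=n-1$.

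For $n\ge 2$ the star $K_{1,n}$ is a tree whose leaves are exactly its $n$ pendants, so the formula recalled in the introduction (for any tree $T$, $dim_s(T)$ equals the number of leaves of $T$ minus one) immediately yields $dim_s(K_{1,n})=n-1$, as required.

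A self-contained alternative avoids quoting the tree formula altogether: the $n$ pendant vertices of $G\odot K_1$ are pairwise mutually maximally distant, since the unique neighbor of any pendant lies strictly closer to every other pendant than the pendant itself does. Remark~\ref{lemmaL} then forces any strong metric basis to contain at least $n-1$ of them, while a routine case check shows that any set consisting of $n-1$ pendants does form a strong resolving set. Since the identification $K_1+\bigcup_{i=1}^{n}K_1\cong K_{1,n}$ is transparent, there is no real obstacle beyond this clean application of Lemma~\ref{iguales}.
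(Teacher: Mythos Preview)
Your proof is correct and follows exactly the paper's own argument: apply Lemma~\ref{iguales} with $H=K_1$, identify $K_1+\bigcup_{i=1}^{n}K_1$ with the star $K_{1,n}$, and use the known value $dim_s(K_{1,n})=n-1$. You simply spell out the justification for $dim_s(K_{1,n})=n-1$ (via the tree formula from the introduction, plus an optional direct argument) that the paper takes for granted.
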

\begin{proof}
For $H\cong K_1$ Lemma \ref{iguales} leads to  $dim_s(G\odot K_1)=dim_s(K_1 + \cup_{i=1}^nK_1)=dim_s(K_{1,n})=n-1.$
\end{proof}

Our next result is obtained from Lemma \ref{iguales} and Theorem \ref{TheoremTwinclique}.

\begin{theorem}\label{Equalities}
Let $G$ be a connected graph of order $n_1$. Let $H$ be a  graph  of order  $n_2$ and  maximum degree $\Delta$.
\begin{enumerate}[{\rm (i)}]
\item   If $\Delta=n_2-1$, then $dim_s(K_1 + H)=n_2+1-\varpi(H).$
\item  If $\Delta\le n_2-2$ or $n_1\ge 2$, then $dim_s(G\odot H)=n_1n_2-\varpi(H).$
\end{enumerate}
\end{theorem}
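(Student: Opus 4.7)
The plan is to combine Lemma \ref{iguales} with Theorem \ref{TheoremTwinclique}. Write $u$ for the vertex of $K_1$ and $H^{*} = \bigcup_{i=1}^{n_1} H_i$ for the disjoint union of the $n_1$ copies of $H$. Lemma \ref{iguales} gives $dim_s(G\odot H) = dim_s(K_1 + H^{*})$, and since $u$ dominates the join, both $K_1 + H^{*}$ and $K_1+H$ are connected, of order at least two, and of diameter at most two. Theorem \ref{TheoremTwinclique} therefore reduces both assertions to a twin-free clique computation, namely
\[
\varpi(K_1 + H^{*}) = 1 + \varpi(H) \ \text{under the hypothesis of (ii)}, \qquad \varpi(K_1 + H) = \varpi(H) \ \text{under the hypothesis of (i)}.
\]
In the degenerate case where the join is complete (which can only happen for $H = K_{n_2}$ in part (i)), both claims can be checked directly from $dim_s(K_{n_2+1}) = n_2$ and $\varpi(K_{n_2}) = 1$.

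For part (ii), any clique of $K_1 + H^{*}$ meets at most one $V_i$ because $H^{*}$ is a disjoint union, so every twin-free clique lives in $V_i \cup \{u\}$ for some $i$. Two closed neighborhoods of vertices of $V_i$ in $K_1+H^{*}$ differ from the corresponding ones in $H_i$ only by the common element $u$, so the projection onto $V_i$ is a twin-free clique of $H$; this gives $\varpi(K_1+H^{*}) \le 1 + \varpi(H)$. For the reverse inequality I would take a $\varpi(H)$-set placed inside $H_1$ and adjoin $u$: the only remaining check is that $u$ is not a true twin of any $x \in V_1$, which is guaranteed by the hypothesis ``$n_1 \ge 2$ or $\Delta \le n_2 - 2$'', since in the first case $V(H^{*}) \supsetneq V_1 \cup \{u\} \supseteq N_{K_1+H^{*}}[x]$, and in the second case $N_H[x] \ne V(H)$.

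For part (i) with $\Delta = n_2-1$, the inequality $\varpi(K_1+H) \ge \varpi(H)$ is immediate since a $\varpi(H)$-set remains twin-free after adjoining $u$ uniformly to every closed neighborhood. For the matching upper bound, let $Z$ be a $\varpi(K_1+H)$-set; if $u \notin Z$ one obtains $|Z|\le \varpi(H)$ directly, and if $u \in Z$ then $Z\setminus\{u\}$ cannot contain any vertex $v$ with $\delta_H(v)=n_2-1$, since then $N_{K_1+H}[v] = V(K_1+H) = N_{K_1+H}[u]$ would make $v$ a true twin of $u$. To finish I use that under $\Delta = n_2-1$ every $\varpi(H)$-set must contain exactly one vertex of degree $n_2-1$: if a maximum twin-free clique $W$ of $H$ avoided them all, any such vertex could be appended to $W$ without producing a true twin (since $W$ contains no other dominating vertex of $H$), contradicting maximality. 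Hence any twin-free clique of $H$ that omits all degree-$(n_2-1)$ vertices has cardinality at most $\varpi(H)-1$, forcing $|Z|\le \varpi(H)$. Substituting the two clique identities back yields $n_1n_2 - \varpi(H)$ for (ii) and $n_2+1-\varpi(H)$ for (i); I expect the subtlest point to be this last ``dominating vertex'' step in part (i), since in part (ii) the stated hypothesis blocks the analogous $u$-twinning obstruction outright.
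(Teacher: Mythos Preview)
Your argument is correct and follows the same route as the paper: apply Lemma~\ref{iguales} and the diameter-two case of Theorem~\ref{TheoremTwinclique} to reduce everything to computing $\varpi(K_1+H^{*})$ and $\varpi(K_1+H)$, and then verify these equal $\varpi(H)+1$ and $\varpi(H)$ under the respective hypotheses. The paper declares (i) ``trivial'' and for (ii) records the identities $\varpi(K_1+\cup_i H_i)=\varpi(K_1+H)+1=\varpi(H)+1$ (when $n_1>1$, $\Delta=n_2-1$) and $\varpi(K_1+\cup_i H_i)=\varpi(K_1+H)=\varpi(H)+1$ (when $\Delta\le n_2-2$) without further justification; your write-up supplies exactly the missing details, in particular the observation that every $\varpi(H)$-set contains a dominating vertex when $\Delta=n_2-1$, which is the key point behind the paper's unproved equality $\varpi(K_1+H)=\varpi(H)$.
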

\begin{proof}
Since (i) is trivial, we will prove (ii). For $\Delta=n_2-1$ we have  $\varpi\left( K_1 + \cup_{i=1}^{n_1}H_i  \right) {\buildrel {n_1> 1}\over =} \varpi(K_1 + H)+1=\varpi(H)+1$, while for  $\Delta\le n_2-2$ we have  $\varpi\left( K_1 + \cup_{i=1}^{n_1}H_i  \right) = \varpi(K_1 + H)=\varpi(H)+1$. So, by Lemma \ref{iguales} and Theorem \ref{TheoremTwinclique} we conclude the proof.
\end{proof}

Let us derive some consequences of the above result.

\begin{corollary}\label{thClique}
Let $G$ be a connected graph of order $n_1$ and let  $H$ be a  graph of order $n_2$, clique number $\omega(H)$ and  maximum degree $\Delta$. Let  $c(H)$ be the number of  vertices of $H$ having degree $n_2-1$.
\begin{enumerate}[{\rm (i)}]
\item  If $H$ has no true twins and $\Delta=n_2-1$, then $$dim_s(K_1 + H) =  n_2 +1-\omega(H).$$
\item  If $H$ has no true twins and $\Delta\le n_2-2$, $$dim_s(K_1 + H) =  n_2 -\omega(H).$$
\item  If $H$ has no true twins and $n_1\ge 2$, then $$dim_s(G\odot H) =  n_1n_2-\omega(H).$$
\item  If the only true twins of $H$ are  vertices of degree $n_2-1$, then  $$dim_s(K_1 + H) =  n_2 +c(H)-\omega(H)$$
\item  If the only true twins of $H$ are  vertices of degree $n_2-1$ and $n_1\ge 2$, then $$dim_s(G\odot H) =  n_1n_2+c(H)-1-\omega(H).$$
\end{enumerate}
\end{corollary}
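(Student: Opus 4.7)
The plan is to reduce each of the five assertions to an application of Theorem \ref{Equalities} after relating $\varpi(H)$ to $\omega(H)$ under the relevant twin-structure hypothesis.

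For parts (i)--(iii), which assume that $H$ has no true twins, every clique of $H$ is automatically a twin-free clique, so $\varpi(H)=\omega(H)$. Then (i) follows from Theorem \ref{Equalities}(i) (the hypothesis $\Delta=n_2-1$ selects this branch), while (ii) and (iii) follow from Theorem \ref{Equalities}(ii), according as its hypothesis is satisfied through $\Delta\le n_2-2$ (with $n_1=1$) or through $n_1\ge 2$.

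For parts (iv) and (v), the combinatorial heart of the argument is the identity
\[
\varpi(H)=\omega(H)-c(H)+1,
\]
which I would establish under the assumption that the true twins of $H$ are exactly its $c(H)\ge 1$ vertices of degree $n_2-1$. The key observation for the upper bound is that any clique of $H$ consisting solely of non-universal vertices can be extended by adjoining all $c(H)$ universal vertices, hence has at most $\omega(H)-c(H)$ elements; since the $c(H)$ universal vertices form a single true-twin equivalence class and therefore a twin-free clique contains at most one of them, this yields $\varpi(H)\le\omega(H)-c(H)+1$. For the matching lower bound, I would take a maximum clique of $H$ and delete all but one of the universal vertices it contains; the remaining $\omega(H)-c(H)+1$ vertices still form a clique, and it is twin-free because the non-universal members have no true twin in $H$ by hypothesis, whereas the retained universal vertex can only be a true twin of another universal vertex, all of which have been removed.

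With the identity in hand, (iv) follows from Theorem \ref{Equalities}(i) (since the existence of at least one universal vertex forces $\Delta=n_2-1$) and (v) follows from Theorem \ref{Equalities}(ii), both by direct substitution. I expect the only real obstacle to be establishing the combinatorial identity; the edge case $c(H)=0$ corresponds to $H$ having no true twins at all and is already subsumed by parts (ii) and (iii), while the intermediate case $c(H)=1$ is harmless since then the identity simply reads $\varpi(H)=\omega(H)$ and the formulas in (iv) and (v) collapse to those of (ii) and (iii).
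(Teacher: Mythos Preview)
Your approach is correct and matches the paper's intended derivation: the corollary is stated without proof as an immediate consequence of Theorem~\ref{Equalities}, and the only missing ingredient is precisely the relation between $\varpi(H)$ and $\omega(H)$ that you supply. Your argument for the identity $\varpi(H)=\omega(H)-c(H)+1$ is sound; note in particular that every maximum clique of $H$ necessarily contains all $c(H)$ universal vertices (else it could be enlarged), so your deletion step in the lower bound removes exactly $c(H)-1$ vertices. Your handling of the edge cases is also appropriate: the hypothesis of (iv)--(v) should indeed be read as entailing $c(H)\ge 1$, since for $c(H)=0$ the formula in (v) would be off by one, and that case is already covered by (iii).
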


As our next result shows, when $H$ is a triangle free graph we obtain the exact value for the strong metric dimension of $G\odot H$.

\begin{corollary}\label{upper:n-2}
Let $G$ be a connected graph of order $n_1$ and let  $H$ be a  triangle free graph of order $n_2\ge 3$ and maximum degree $\Delta$. If $n_1\ge 2$ or $\Delta \le n_2-2$, then  $$dim_s(G\odot H)= n_1n_2-2.$$
\end{corollary}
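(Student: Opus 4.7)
The plan is to apply Theorem \ref{Equalities} (ii), which under the hypothesis $n_1 \ge 2$ or $\Delta \le n_2 - 2$ yields $dim_s(G \odot H) = n_1 n_2 - \varpi(H)$, so the entire argument reduces to showing $\varpi(H) = 2$.

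For the upper bound, triangle-freeness of $H$ immediately forces $\omega(H) \le 2$, and since $\varpi(H) \le \omega(H)$, we obtain $\varpi(H) \le 2$.

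For the lower bound, I would exhibit a twin-free clique of cardinality two, that is, an edge $uv$ of $H$ with $N_H[u] \ne N_H[v]$. The key structural fact is that in a triangle-free graph any two adjacent vertices $u, v$ satisfy $N_H(u) \cap N_H(v) = \emptyset$, since a common neighbor would close a triangle. Consequently, $N_H[u] = N_H[v]$ on an edge $uv$ forces both $u$ and $v$ to have degree one and to be joined only to each other, making $\{u,v\}$ an isolated $K_2$ component. Using $n_2 \ge 3$ I would select an edge $uv$ that avoids this degenerate configuration, for instance by picking a vertex of degree at least two together with one of its neighbors, thus producing a non-twin adjacent pair and hence $\varpi(H) \ge 2$.

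The main obstacle is this lower-bound step: one must combine the triangle-free hypothesis with $n_2 \ge 3$ to guarantee the existence of a non-twin edge, ruling out those triangle-free graphs in which every edge of $H$ is an isolated $K_2$. Once $\varpi(H) = 2$ is established, Theorem \ref{Equalities} (ii) gives $dim_s(G \odot H) = n_1 n_2 - 2$ as desired.
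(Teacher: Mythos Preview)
Your reduction to Theorem \ref{Equalities} (ii) and the claim $\varpi(H)=2$ is exactly the intended route (the paper states the corollary without proof, as an immediate consequence of Theorem \ref{Equalities}), and your upper bound $\varpi(H)\le\omega(H)\le 2$ is correct.

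The obstacle you flag in the lower bound, however, is genuine and in fact cannot be overcome under the hypotheses as stated: nothing forces $H$ to contain a vertex of degree at least two. If $\Delta\le 1$, so that $H$ is a disjoint union of isolated edges and isolated vertices, then every edge of $H$ joins a pair of true twins and hence $\varpi(H)=1$. For instance, take $H=K_2\cup K_1$: here $n_2=3$, $\Delta=1\le n_2-2$, and Theorem \ref{Equalities} (ii) gives $dim_s(G\odot H)=3n_1-1$, not $3n_1-2$. Thus the corollary as written needs an extra hypothesis such as $\Delta\ge 2$ (and connectedness of $H$ suffices for this when $n_2\ge 3$); under that additional assumption your choice of a vertex of degree at least two together with any one of its neighbors does yield a twin-free edge, and the argument is complete.
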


Our next result is an interesting consequence of Theorem \ref{TheoremTwinclique} and Theorem \ref{Equalities}.

\begin{theorem}
Let $G$ be a connected graph of order $n_1$. Let $H$ be a  graph  of order  $n_2$ and  maximum degree $\Delta$.
\begin{enumerate}[{\rm (i)}]
\item  If $\Delta=n_2-1$, then
$$dim_s(K_1 + H)=dim_s(H)+1.$$
\item  If $H$ has diameter two and  either $\Delta\le n_2-2$ or $n_1\ge 2$, then $$dim_s(G\odot H)=(n_1-1)n_2+dim_s(H).$$

\item  If $H$ is not connected or its  diameter is greater than two, then
$$dim_s(G\odot H)=(n_1-1)n_2+dim_s(K_1 + H).$$
\end{enumerate}
\end{theorem}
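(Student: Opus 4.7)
The plan is to reduce each part of the theorem to a consequence of Theorem \ref{Equalities} combined with Theorem \ref{TheoremTwinclique}, in every case eliminating the twin-free clique number $\varpi(H)$ in favor of $dim_s(H)$ or $dim_s(K_1+H)$.

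For part (i), I would first observe that $\Delta=n_2-1$ forces $H$ to contain a universal vertex, so $H$ has diameter at most two. If $H$ is complete, a direct check gives $dim_s(H)=n_2-1=n_2-\varpi(H)$; otherwise $H$ has diameter exactly two and Theorem \ref{TheoremTwinclique} yields $dim_s(H)=n_2-\varpi(H)$. Plugging this identity into the formula $dim_s(K_1+H)=n_2+1-\varpi(H)$ supplied by Theorem \ref{Equalities}(i) immediately gives $dim_s(K_1+H)=dim_s(H)+1$.

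For part (ii), the hypotheses match those of Theorem \ref{Equalities}(ii), which produces $dim_s(G\odot H)=n_1n_2-\varpi(H)$. Because $H$ has diameter two, Theorem \ref{TheoremTwinclique} provides $\varpi(H)=n_2-dim_s(H)$, and a direct substitution rewrites the equality as $dim_s(G\odot H)=(n_1-1)n_2+dim_s(H)$.

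For part (iii), the key observation is that whenever $H$ is disconnected or has diameter greater than two, $H$ can have no universal vertex, so $\Delta\le n_2-2$. This allows applying Theorem \ref{Equalities}(ii) twice: once to $G\odot H$, which yields $dim_s(G\odot H)=n_1n_2-\varpi(H)$; and once with $G$ replaced by $K_1$ (where the hypothesis $\Delta\le n_2-2$ again supplies the needed condition), which yields $dim_s(K_1\odot H)=n_2-\varpi(H)$. Since $K_1\odot H=K_1+H$, subtracting these two identities eliminates $\varpi(H)$ and produces $dim_s(G\odot H)=(n_1-1)n_2+dim_s(K_1+H)$. Rather than being a real obstacle, the only point that needs care is the universal-vertex observation in part (iii), since it is precisely what guarantees $\Delta\le n_2-2$ under either stated hypothesis and unlocks the second application of Theorem \ref{Equalities}(ii); apart from this, the arguments consist of routine algebraic manipulation of the already-established formulas.
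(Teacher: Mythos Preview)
Your proposal is correct and follows precisely the route the paper intends: the paper states this theorem as ``an interesting consequence of Theorem~\ref{TheoremTwinclique} and Theorem~\ref{Equalities}'' without spelling out the details, and your argument---using Theorem~\ref{Equalities} to express everything in terms of $\varpi(H)$ and then Theorem~\ref{TheoremTwinclique} (or, in part~(iii), a second instance of Theorem~\ref{Equalities}(ii) with $G=K_1$) to eliminate $\varpi(H)$---is exactly the intended derivation. The care you take with the universal-vertex observation in parts~(i) and~(iii) and with the complete-graph subcase in part~(i) fills in the only nontrivial points.
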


Note that the above theorem allow us to derive results on the strong metric dimension of some join graphs.

\begin{corollary}
Let $H$ be a graph of order $n$ and maximum degree $\Delta$.
\begin{enumerate}[{\rm (i)}]
\item   If $\Delta = n-1$, then $$dim_s(K_r + H)=dim_s(H)+r.$$
\item  If  $\Delta\le n-2$ and  $H$ has diameter two, then
$$dim_s(K_r + H)=dim_s(H)+r-1.$$
\item  If $H$ is not connected or its  diameter is greater than two, then $$dim_s(K_r + H)=dim_s(K_1 + H)+r-1.$$
\end{enumerate}
\end{corollary}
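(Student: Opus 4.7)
The plan is to proceed by induction on $r$, using the associativity of the join operation to write $K_r + H = K_1 + (K_{r-1}+H)$ for every $r\ge 2$. This presents $K_r + H$ as a one-vertex join placed on top of the smaller graph $F := K_{r-1}+H$, so the preceding theorem can be applied with $G = K_1$ and with $F$ playing the role that $H$ plays there. Recall also from the paper's preliminaries that $K_1 + F \cong K_1 \odot F$, which is what lets us quote the preceding theorem for $K_1 + F$.

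The base case $r = 1$ reduces each of the three formulas to the preceding theorem. Part (i) is exactly part (i) of that theorem. Part (ii) is the specialization $n_1 = 1$ of part (ii): the factor $(n_1-1)n_2$ vanishes and the hypothesis $\Delta \le n-2$ is already part of the assumption in case (ii) of the corollary, so one obtains $dim_s(K_1 + H) = dim_s(H)$. Part (iii) is tautological when $r=1$.

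For the inductive step at $r\ge 2$, observe that each of the $r-1$ vertices inherited from $K_{r-1}$ is universal in $F$, so the maximum degree of $F$ equals $|V(F)| - 1 = n+r-2$. Consequently part (i) of the preceding theorem applies to $K_1 + F$ regardless of which case of the corollary is under consideration, yielding the uniform recurrence
$$dim_s(K_r + H) = dim_s(K_1 + F) = dim_s(F) + 1 = dim_s(K_{r-1}+H) + 1.$$
Telescoping this recurrence down to the appropriate base case produces the three claimed values $dim_s(H)+r$, $dim_s(H)+r-1$, and $dim_s(K_1+H)+r-1$, respectively.

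The argument is essentially a single telescoping induction and offers no serious obstacle. The only point that genuinely needs to be checked is that $K_{r-1}+H$ has a universal vertex for every $r\ge 2$; this is what collapses the inductive step onto case (i) of the preceding theorem regardless of which case of the corollary is being verified, and is what propagates the $+1$ cleanly through the induction.
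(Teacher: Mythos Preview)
Your argument is correct. The paper itself gives no proof of this corollary beyond the remark that ``the above theorem allow[s] us to derive results on the strong metric dimension of some join graphs,'' so your induction on $r$---writing $K_r+H=K_1+(K_{r-1}+H)$ and repeatedly applying part (i) of the preceding theorem once $K_{r-1}+H$ has a universal vertex---is exactly the kind of derivation the paper is gesturing at, and it is carried out cleanly.
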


\subsection{Bounds}

It is well known that the second smallest Laplacian eigenvalue of a graph is probably the most important information
contained in the spectrum. This eigenvalue, frequently called \emph{algebraic connectivity}, is related to several important graph
invariants and imposes reasonably good bounds on the values of several parameters of graphs which are very hard to
compute.

The following theorem shows the relationship between the algebraic
connectivity of a graph and the clique number.

\begin{theorem}\label{teo-mu-strong}
Let $G$ be a connected non-complete graph of order $n$, maximum degree $\Delta$ and algebraic connectivity $\mu$.
The clique number of $\omega(G)$ is bounded by
$$\omega(G)\le \frac{n(\Delta-\mu+1)}{n-\mu}.$$
\end{theorem}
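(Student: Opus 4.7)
The plan is to apply the variational (Courant--Fischer) characterization of the algebraic connectivity
$$\mu=\min\left\{\frac{x^{T}Lx}{x^{T}x}:\ x\neq 0,\ x\perp \mathbf{1}\right\},$$
where $L=D-A$ is the Laplacian matrix of $G$, and produce a carefully chosen test vector $x$ built from a maximum clique of $G$. First I would fix a maximum clique $K\subseteq V(G)$ with $|K|=\omega:=\omega(G)$ and define $x\in\mathbb{R}^{n}$ by $x_{v}=n-\omega$ if $v\in K$ and $x_{v}=-\omega$ if $v\notin K$. A short computation gives $\sum_{v}x_{v}=\omega(n-\omega)-(n-\omega)\omega=0$, so $x\perp\mathbf{1}$, and $x^{T}x=\omega(n-\omega)^{2}+(n-\omega)\omega^{2}=\omega n(n-\omega)$.

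Next I would estimate the numerator using $x^{T}Lx=\sum_{uv\in E(G)}(x_{u}-x_{v})^{2}$. Edges with both endpoints inside $K$ or both outside $K$ contribute $0$, and each edge with exactly one endpoint in $K$ contributes $((n-\omega)-(-\omega))^{2}=n^{2}$. Denoting by $e(K,V\setminus K)$ the number of crossing edges, and using the fact that every vertex of $K$ already has $\omega-1$ neighbors inside $K$ and degree at most $\Delta$, each vertex of $K$ contributes at most $\Delta-\omega+1$ crossing edges, whence $e(K,V\setminus K)\le \omega(\Delta-\omega+1)$. Consequently $x^{T}Lx\le n^{2}\omega(\Delta-\omega+1)$, and the Rayleigh quotient yields
$$\mu\le\frac{x^{T}Lx}{x^{T}x}\le\frac{n^{2}\omega(\Delta-\omega+1)}{\omega n(n-\omega)}=\frac{n(\Delta-\omega+1)}{n-\omega}.$$

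Finally I would rearrange this inequality to isolate $\omega$. Because $G$ is non-complete, one checks that $\mu<n$, so $n-\mu>0$ and division is legitimate. Cross-multiplying by $n-\omega>0$ gives $\mu(n-\omega)\le n(\Delta-\omega+1)$, i.e.\ $\omega(n-\mu)\le n(\Delta-\mu+1)$, which is the claimed bound. The main substantive step is the correct choice of the clique-indicator test vector and the degree-counting bound on $e(K,V\setminus K)$; everything else is algebraic manipulation, with the only subtlety being the verification that $n-\mu>0$ so that the final rearrangement is well defined.
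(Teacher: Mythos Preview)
Your proof is correct and follows essentially the same strategy as the paper: pick a maximum clique, plug an indicator-type vector into the variational characterization of $\mu$, bound the resulting edge cut by $\omega(\Delta-\omega+1)$ via degree counting, and rearrange. The only cosmetic difference is that the paper uses Fiedler's shift-invariant form $\mu=2n\min\bigl\{\sum_{i\sim j}(w_i-w_j)^2/\sum_{i,j}(w_i-w_j)^2\bigr\}$ with the $0/1$ indicator of the clique, whereas you use the equivalent Rayleigh-quotient form with the centered vector $x=(n-\omega)\mathbf{1}_K-\omega\mathbf{1}_{V\setminus K}$; these give the same inequality, and your explicit check that $n-\mu>0$ (needed for the final division) is a welcome addition.
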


\begin{proof}
The algebraic connectivity of $G$,  satisfies the following equality shown by Fiedler
\cite{fiedler},
\begin{equation}
  \mu=2n \min \left\{ \frac{\sum_{v_i\sim v_j}(w_i-w_j)^2  }
  {\sum_{v_i\in V}\sum_{v_j\in V}(w_i-w_j)^2} \right\},     \label{rfiedler}
\end{equation}
where not all the components of the vector $(w_1, w_2, ..., w_n)\in \mathbb{R}^n$ are equal.
Let $S$ be a clique of $G=(V,E)$ of cardinality $\omega(G)$.
The vector  $w\in \mathbb{R}^n$ associated to $S$ is defined
as,
\begin{equation}\label{defin-de-w}
w_i= \left\lbrace \begin{array}{ll} 1  & {\rm if }\quad  v_i\in S;
                            \\ 0 &  {\rm  otherwise,} \end{array}
                                                \right.
\end{equation}
Considering the 2-partition $\{ S, V- S \}$ of the vertex set $V$ we have $(w_{i}-w_{j})^{2}=1$ if $v_{i}$ and $v_{j}$ \ are
in different sets of the partition, and 0 if they are in the same set. Then,
    \begin{equation}
      \sum_{v_i\in V}\sum_{v_j\in V}(w_i-w_j)^2=2\left| S\right|(n-\left| S\right|). \label{denominador}
    \end{equation}
By (\ref{rfiedler}) and (\ref{denominador}) we have
\begin{equation}
    \mu \le   \frac{n \sum_{v_i\sim v_j}(w_i-w_j)^2  }{\left| S\right|(n-\left| S\right|)}.    \label{myfiedler}
\end{equation}
Moreover, since $\displaystyle\sum_{v_i\sim v_j}(w_i-w_j)^2$ is the number of edges of $G$ having one endpoint in $S$ and the other one in $V-S$, we have
$\displaystyle\sum_{v_i\sim v_j}(w_i-w_j)^2=\displaystyle\sum_{v\in S}|N_{V-S}(v)|,$ where $N_{V-S}(v)$ denotes the set of neighbors that $v$ has in $V-S$.
Thus, since $S$  is a clique of $G$, we have that for every $v\in S$, $|N_{V-S}(v)|= \delta_G(v)-(|S|-1)$. Hence,
\begin{equation}\label{cond-mu-3}
 \mu \le   \frac{n \sum_{v\in S} \left(\delta_G(v)-|S|+1\right)}{\left| S\right|(n-\left| S\right|)}\le \frac{n(\Delta-|S|+1)}{n-|S|}.
\end{equation}
The result follows directly by inequality  (\ref{cond-mu-3}).
\end{proof}

The above bound is tight, it is achieved, for instance, for the Cartesian product graph $G=K_{r}\square K_2$, where $\mu=2$, $n=2r$, $\Delta=r$ and $\omega(G)=r.$

Notice that the above result and the inequality $\omega(H)\ge \varpi(H)$ combined with Theorem \ref{TheoremTwinclique}, Theorem \ref{corollary_inequality} or Theorem \ref{Equalities}, lead to lower bounds on the strong metric dimension. For instance, by Theorem \ref{TheoremTwinclique} we derive the following tight bound on the strong metric dimension of graphs with diameter two.

\begin{theorem}
Let $H$ be a connected graph of diameter two, order $n\ge 2$, maximum degree $\Delta$ and algebraic connectivity $\mu$. Then
$$dim_s(H)\ge \left\lceil\frac{n(n-\Delta-1)}{n-\mu}\right\rceil.$$
\end{theorem}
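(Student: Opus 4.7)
The plan is to chain together three ingredients: the diameter-two equality from Theorem \ref{TheoremTwinclique}, the trivial inequality $\varpi(H)\le \omega(H)$, and the spectral upper bound on $\omega(H)$ given by Theorem \ref{teo-mu-strong}. Since $H$ has diameter exactly two, it is in particular non-complete, so Theorem \ref{teo-mu-strong} is applicable.

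First I would invoke Theorem \ref{TheoremTwinclique} to write $dim_s(H)=n-\varpi(H)$, and then use $\varpi(H)\le \omega(H)$ to obtain $dim_s(H)\ge n-\omega(H)$. Next, I would substitute the upper bound from Theorem \ref{teo-mu-strong}, namely $\omega(H)\le \frac{n(\Delta-\mu+1)}{n-\mu}$, to get
\[
dim_s(H)\ge n-\frac{n(\Delta-\mu+1)}{n-\mu}=\frac{n(n-\mu)-n(\Delta-\mu+1)}{n-\mu}=\frac{n(n-\Delta-1)}{n-\mu}.
\]
Finally, since the strong metric dimension is an integer, the ceiling may be taken, yielding the claimed bound.

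There is essentially no obstacle here; the proof is a direct concatenation of the two previously established results, together with the inequality $\varpi(H)\le \omega(H)$ noted right after the definition of twin-free clique number. The only minor point worth checking is the legitimacy of applying Theorem \ref{teo-mu-strong}, which requires $H$ to be non-complete; this is automatic from the diameter-two hypothesis, since $K_n$ has diameter one. The algebraic simplification $n-\frac{n(\Delta-\mu+1)}{n-\mu}=\frac{n(n-\Delta-1)}{n-\mu}$ is the only computation, and it is routine.
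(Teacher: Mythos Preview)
Your proof is correct and follows exactly the route the paper intends: combine the diameter-two equality $dim_s(H)=n-\varpi(H)$ from Theorem~\ref{TheoremTwinclique}, the inequality $\varpi(H)\le\omega(H)$, and the spectral bound of Theorem~\ref{teo-mu-strong}, then take the ceiling. Your observation that diameter two forces $H$ to be non-complete, so that Theorem~\ref{teo-mu-strong} applies, is the only detail worth making explicit, and you have done so.
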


\section*{Acknowledgements}
The authors would like to thank the anonymous reviewers for their valuable comments and suggestions to improve the quality of the paper.

\end{document}